\newcommand{\abr}{\ensuremath{\, \usftext{r} \, }}
\newcommand{\aber}{\ensuremath{\, \usftext{e} \, }}
\newcommand{\HAP}{\ensuremath{ \usftext{HAP}  }}
\newcommand{\Hcap}{\ensuremath{ \usftext{H}  }}
\newcommand{\APP}{\ensuremath{ \usftext{APP}  }}
\newcommand{\EON}{\ensuremath{ \usftext{EON}  }}
\newcommand{\LPT}{\ensuremath{ \usftext{LPT}  }}
\newcommand{\QFAC}{\ensuremath{ \usftext{QF-AC}  }}
\title{Arithmetical conservation results}
\author{Benno van den Berg$^1$}
\address{${}^1$ Institute for Logic, Language and Computation (ILLC), University of Amsterdam, P.O. Box 94242, 1090 GE Amsterdam, the Netherlands. E-mail: bennovdberg@gmail.com.}
\author{Lotte van Slooten$^2$}
\address{${}^2$ Mathematical Institute, Utrecht University, P.O. Box 80010, 3508 TA Utrecht, the Netherlands. E-mail: lottevanslooten@live.nl.}
\date{June 15, 2017}
\begin{document}

\begin{abstract}
In this paper we present a proof of Goodman's Theorem, a classical result in the metamathematics of constructivism, which states that the addition of the axiom of choice to Heyting arithmetic in finite types does not increase the collection of provable arithmetical sentences. Our proof relies on several ideas from earlier proofs by other authors, but adds some new ones as well. In particular, we show how a recent paper by Jaap van Oosten can be used to simplify a key step in the proof. We have also included an interesting corollary for classical systems pointed out to us by Ulrich Kohlenbach.
\end{abstract}

\maketitle

\section{Introduction}

The axiom of choice has a special status in constructive mathematics. On the one hand, it is arguably justified on the constructive interpretation of the quantifiers. Indeed, one could argue that a constructive proof of $\forall x \in X \, \exists y \in Y \, \varphi(x, y)$ should contain, implicitly, an effective method for producing, given an arbitrary $x \in X$, an element $y \in Y$ such that $\varphi(x, y)$. Such an effective method can then be seen as a constructive choice function $f: X \to Y$ such that $\varphi(x, f(x))$ holds for any $x \in X$. In fact, it is precisely for this reason that the type-theoretic axiom of choice is \emph{provable} in Martin-L\"of's constructive type theory (see \cite{martinlof84}).

On the other hand, many standard systems for constructive mathematics do not include the axiom of choice. One example of such a system is Aczel's constructive set theory {\bf CZF}. Indeed, an argument due to Diaconescu shows that in {\bf CZF} the set-theoretic axiom of choice {\bf AC} implies a restricted form of the Law of Excluded Middle (see, for example, \cite{aczelrathjen01}). One thing one learns from this is that the status of the axiom of choice may depend on the way it is formulated as well as on the background theory. (For an interesting perspective on these matters, see \cite{martinlof09}.)

In the present paper we concentrate on $\ha$, Heyting arithmetic in all finite types. This system dates back to the work by Kreisel from the late fifties \cite{kreisel59} and has since become important in the study of constructivism. Currently, it is also playing an essential r\^ole in the work on the extraction of programs from proofs and proof mining, as can be seen from the recent books \cite{schwichtenbergwainer12, kohlenbach08}. In addition, it is also starting to attract attention in the Reverse Mathematics community, as can be seen from some recent papers on higher-order reverse mathematics like \cite{kohlenbach05c,hunter08,schweber15}.

The precise formulation of the axiom of choice that we will look at is the following axiom of choice for all finite types:
\[ \AC: \quad \forall x^\sigma \, \exists y^\tau \, \varphi(x, y, \tup z) \to \exists f^{\sigma \to \tau} \, \forall x^\sigma \, \varphi(x, fx, \tup z). \]
This version of the axiom of choice is not provable in $\ha$; however, one can show that $\ha + \AC$ and $\ha$ are equiconsistent (for example, by using Kreisel's modified realizability from \cite{kreisel59}).

This is in marked contrast to what happens in the case of $\pa$, Peano arithmetic in all finite types. Indeed, using classical logic one can derive comprehension axioms from the axiom of choice: for if $\varphi(x^\sigma)$ is any formula in the language of $\pa$, then
\[ \forall x^\sigma \, \exists n^0 \, \big( \, n = 0 \leftrightarrow \varphi(x^\sigma) \, \big) \]
is derivable using classical logic, from which
\[ \exists f^{\sigma \to 0} \, \forall x^\sigma \, \big( \, f(x) = 0 \leftrightarrow \varphi(x^\sigma) \, \big) \]
follows using the axiom of choice. For this reason the system $\pa + \AC$ has the strength of full higher-order arithmetic, a much stronger system that $\pa$. This is another manifestation of the special status of the axiom of choice in constructivism.

In the constructive case, more is true. Not only are  $\ha + \AC$ and $\ha$ equiconsistent, but they also prove the same arithmetical sentences (where a sentence is arithmetical if its quantifiers range over natural numbers and the equalities it contains are between natural numbers). This is a classical result in the metamathematics of constructivism and goes by the name Goodman's Theorem. As the name suggests, it was first proved by Nicholas Goodman in 1976 (see \cite{goodman76}). The original proof was based on a rather complicated theory of constructions and after this proof was published, various people have sought simpler proofs. One such proof was given by Goodman himself using a new proof-theoretic interpretation combining ideas from forcing and realizability \cite{goodman78}. Beeson showed how this can be understood as the composition of forcing and realizability and extended Goodman's theorem to the extensional setting (see \cite{beeson79} and also \cite{beeson85}). Other proofs have been given by Gordeev \cite{gordeev88}, Mints \cite{mints75}, Coquand \cite{coquand13} and Renardel de Lavalette \cite{renardeldelavalette90}; the authors of this paper are unsure whether this list is complete. (An interesting observation, due to Kohlenbach, is that Goodman's Theorem can fail badly for fragments: see \cite{kohlenbach99}.)

What we have done in this paper is to give yet another proof of Goodman's Theorem. Our reasons for doing so are that we feel that despite its classic status, complete and rigorous proofs of this result are surprisingly rare, while some of the proofs that are complete are not the simplest or most transparent possible. What we have sought to do here is to give a proof which dots all the \emph{i}s. But we should stress that many of the ideas of our proof can already be found in the sources mentioned above. The main novelty may be in some of the details of the presentation and the observation that a recent paper by Jaap van Oosten (see \cite{vanoosten06}) can be used to simplify a key step in the proof. We have also included  an interesting corollary of Goodman's Theorem for classical systems pointed out to us by Ulrich Kohlenbach. We are grateful for his permission to include it here.

Like most of the proofs mentioned above, ours relies on a string of proof-theoretic interpretations starting from $\ha + \AC$ and ending with $\HA$, keeping the set of provable arithmetical sentences fixed. The string of interpretations is quite long: we could easily have made the proof shorter, but we felt that this would make the argument less transparent. Indeed, the proof combines many ideas and by making sure that each proof-theoretic interpretation relies on a single idea only, the whole structure of the argument becomes a lot easier to follow and far more intelligible.

In the intermediate stages we will make use of a theory of operations similar to Beeson's $\EON$ and Troelstra's $\APP$ (for which see \cite{beeson85,troelstravandalen88b,troelstra98}); our version of this is a system we have called $\HAP$. Systems of this form go back to the pioneering work of Feferman \cite{feferman75,feferman79} and we hope that with this paper we honour the memory of more than one great foundational thinker.

The contents of this paper are based on a Master thesis written by the second author and supervised by the first author \cite{slooten14}. Finally, we would like to thank the referee for a useful report.

\section{The systems $\HAP$ and $\HAP_\varepsilon$}

The aim of this section is to introduce the systems $\HAP$ and $\HAP_\varepsilon$ and show that they are conservative over $\HA$. Both these systems are formulated using the logic of partial terms $\LPT$ (also called $E^+$-logic), due to Beeson (see \cite{beeson85,troelstravandalen88a,troelstra98}). For the convenience of the reader we present an axiomatision following \cite{troelstra98}.

\subsection{Logic of partial terms $\LPT$} The idea of the logic of partial terms is that we want to have a logic in which we can reason about terms which do not necessarily denote (think Santa Claus or the present king of France). To express that a term $t$ denotes, or ``$t$ exists'', we will write $t \downarrow$. In fact, here we will consider this as an abbreviation for $t = t$.

Having terms around which do not denote, forces us to change the usual rules for the quantifiers. Where normally we can deduce $A[t/x]$ from $\forall x \, A$ for any term $t$, in $\LPT$ this is only possible if $t$ denotes; conversely, we can only deduce $\exists x \, A$ from $A[t/x]$ if $t$ denotes.

More precisely, the language of $\LPT$ is that of standard intuitionistic predicate logic with equality, with $t \downarrow$ as an abbreviation for $t = t$. Its axioms and inference rules are all substitution instances of:
\begin{displaymath}
\begin{array}{l}
\varphi \to \varphi \\
\varphi, \varphi \to \psi \Rightarrow \psi \\
\varphi \to \psi, \psi \to \chi \Rightarrow \varphi \to \chi \\
\varphi \land \psi \to \varphi, \varphi \land \psi \to \psi \\
\varphi \to \psi, \varphi \to \chi \Rightarrow \varphi \to \psi \land \chi \\
\varphi \to \varphi \lor \psi, \psi \to \varphi \lor \psi \\
\varphi \to \chi, \psi \to \chi \Rightarrow \varphi \lor \psi \to \chi \\
(\varphi \land \psi) \to \chi \Rightarrow \varphi \to (\psi \to \chi)  \\
\varphi \to (\psi \to \chi) \Leftrightarrow (\varphi \land \psi) \to \chi \\
\bot \to \varphi \\
\varphi \to \psi \Rightarrow \varphi \to \forall x \, \psi \quad ( x \not\in {\rm FV}(\varphi)) \\
\forall x \, \varphi \land t \downarrow \to \varphi[t/x] \\
\varphi[t/x] \land t \downarrow \to \exists x \, \varphi \\
\varphi \to \psi \Rightarrow \exists x \, \varphi \to \psi \quad (x \not\in {\rm FV}(\psi))
\end{array}
\end{displaymath}
For equality we have the following rules:
\begin{displaymath}
\begin{array}{l}
\forall x \, (\, x = x \,), \quad \forall x y \, ( \, x = y \to y = x), \quad \forall x y z \, ( \, x = y \land y = z \to x = z)  \\
\forall \tup{x} \tup{y} \, \big( \, \tup{x} = \tup{y} \land F(\tup{x}) \downarrow \to F(\tup{x}) = F({\tup y}) \,\big), \quad \forall \tup{x} \tup{y} \, ( \, R\tup{x} \land \tup{x} = \tup{y} \to R\tup{y})
\end{array}
\end{displaymath}
In addition, all the basic function and relation symbols will be assumed to be strict:
\[ c \downarrow, \quad F(t_1,\ldots, t_n) \downarrow \to t_i \downarrow, \quad R(t_1,\ldots,t_n) \to t_i \downarrow. \]
This includes equality:
\[ s = t \to s \downarrow \land t \downarrow. \]
It will be convenient to introduce the following weaker notion of equality:
\[ s \simeq t := (s \downarrow \lor t \downarrow) \to s = t. \]
So $s \simeq t$ expresses that $s$ and $t$ are equally defined and equal whenever defined. For this weaker notion of equality we can prove the following Leibniz schemata:
\[ \tup{s} \simeq \tup{t} \to F\tup{s} \simeq F\tup{t}, \quad s \simeq t \land A[t/x] \to A[s/x]. \]

\begin{rema}{addingdefpartialfunctions}
Below we will frequently exploit the following fact. Suppose $T$ is some theory based on the logic of partial terms and we can prove in $T$ that for some formula $\varphi(\tup x, y)$ we have
\[ \varphi(\tup x, y) \land \varphi(\tup x, y') \to y = y'. \]
Then we may extend $T$ to a theory $T'$ by introducing a new function symbol $f_\varphi$ and adding a new axiom
\[ \varphi(\tup x, y) \leftrightarrow y = f_\varphi(\tup x). \]
The resulting theory $T'$ will then be conservative over $T$. For a proof, see \cite[Section 2.7]{troelstravandalen88a}.
\end{rema}

\subsection{The system $\HAP$} In this paper a key r\^ole is played by a formal system which we will call $\HAP$. It is a minor variation on Beeson's $\EON$ and Troelstra's $\APP$ (for which see again \cite{beeson85,troelstravandalen88b,troelstra98}).

The language is single-sorted and the logic is based on $\LPT$. There are the usual arithmetical operations $0, S, +, \times$, with the usual axioms:
\begin{displaymath}
\begin{array}{l}
Sx \downarrow, x + y \downarrow, x \times y \downarrow, \\
Sx = Sy \to x = y, 0 \not= Sx, \\
x + 0 = x, x + Sy = S(x + y), \\
x \times 0 = x, x \times Sy = (x \times y) + y
\end{array}
\end{displaymath}
$\HAP$-formulas in this fragment of the language will be called \emph{arithmetical}. In addition, there will be an application operation written with a dot $\cdot$ and combinators
\begin{displaymath}
{\bf k}, {\bf s}, {\bf p}, {\bf p}_0, {\bf p}_1, {\bf succ}, {\bf r}
\end{displaymath}
Instead of $t_1 \cdot t_2$ we will often simply write $t_1 t_2$ and application associates to the left (so $t_1t_2t_3$ stands for $(t_1t_2)t_3$). Note that our assumption that all function symbols are strict implies that we have
\[ s \cdot t \downarrow \to s \downarrow \land t \downarrow. \]
The axioms for the combinators are
\begin{displaymath}
\begin{array}{l}
{\bf k}xy = x, {\bf s}xy \downarrow, {\bf s}xyz \simeq xz(yz), \\
{\bf p}_0 x \downarrow, {\bf p}_1x \downarrow, {\bf p}_0({\bf p}xy) = x, {\bf p}_1({\bf p}xy) = y, {\bf p}({\bf p}_0x)({\bf p}_1x) = x,\\
{\bf succ} \cdot x = Sx, {\bf r}xy0 = x, {\bf r}xy(Sz) = yz({\bf r}xyz).
\end{array}
\end{displaymath}
Finally, we have the induction scheme:
\[ \varphi[0/x] \land \forall x \, ( \varphi \to \varphi[Sx/x]) \to \forall x \, \varphi \]
for all $\HAP$-formulas $\varphi$.

\begin{rema}{comparisonwithEONandAPP}
The system $\HAP$ can be obtained from Troelstra's system $\APP$ (or Beeson's $\EON$) by making the following changes:
\begin{enumerate}
\item[(a)] $\APP$ has a unary predicate $N$ for being a natural number, which is dropped in $\HAP$. Indeed, in $\HAP$ every element acts as a natural number, in that the induction scheme is valid over the entire domain. For that reason $\HAP$ proves that equality is decidable and that there is an element $\bf e$ such that ${\bf e}xy = 0$ precisely when $x = y$.
\item[(b)] $\APP$ has an if-then-else construct $\bf d$ instead of a recursor $\bf r$. This is a minor difference, as these are interderivable (see \cite[Lemma 9.3.8]{troelstravandalen88b}).
\item[(c)] In $\HAP$ the arithmetical operations are primitive, allowing us to define the arithmetical fragment of $\HAP$. In addition, it allows us to change the interpretation of the application operation, while keeping the interpretation of the arithmetical fragment fixed (as in \reflemm{oraclesforHAP} below).
\end{enumerate}
\end{rema}

\begin{prop}{lambdaabstractionforHAP} For each term $t$ in the language of $\HAP$ and variable $x$, one can construct a term $\lambda x.t$, whose free variables are those of $t$ excluding $x$, such that $\HAP \vdash \lambda x.t \downarrow$ and $\HAP \vdash (\lambda x.t) x \simeq t$.
\end{prop}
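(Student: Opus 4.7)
The plan is to construct $\lambda x.\,t$ by recursion on the structure of $t$ and to verify both required properties by the same induction. For the purely applicative fragment of the language (terms built from variables, the constants $0, \mathbf{k}, \mathbf{s}, \mathbf{p}, \mathbf{p}_0, \mathbf{p}_1, \mathbf{succ}, \mathbf{r}$, and $\cdot$), I would set
\[
\lambda x.\,x := \mathbf{s}\mathbf{k}\mathbf{k}, \qquad \lambda x.\,u := \mathbf{k}\,u \ \ \text{if $u$ is a variable $\neq x$ or a constant},
\]
\[
\lambda x.\,(t_1 \cdot t_2) := \mathbf{s}\,(\lambda x.\,t_1)\,(\lambda x.\,t_2).
\]
The statement $\lambda x.\,t \downarrow$ then follows by induction on $t$: in the first two cases directly from $\mathbf{s}xy\downarrow$ together with the fact that variables and constants denote (and that $\mathbf{k}xy = x$ with strictness gives $\mathbf{k}u\downarrow$ for any denoting $u$), and in the application case from $\mathbf{s}xy\downarrow$ combined with the induction hypothesis. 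The equation $(\lambda x.\,t)\,x \simeq t$ follows similarly from the axioms $\mathbf{s}xyz \simeq xz(yz)$ and $\mathbf{k}xy = x$, using the Leibniz schema for $\simeq$ to push the induction hypothesis under $\cdot$.

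To extend the construction to terms involving the arithmetical function symbols $S,\,+,\,\times$, I would first observe that the combinator $\mathbf{succ}$ already gives $\mathbf{succ}\cdot t \simeq St$, so the successor case reduces at once to the applicative one. For $+$ and $\times$ no primitive combinator is present, so I would use the applicative bracket abstraction already established to construct combinators $\hat{+}$ and $\hat{\times}$ representing them. For addition I would set
\[
\hat{+} := \lambda x.\,\lambda y.\,\mathbf{r}\,x\,(\mathbf{k}\,\mathbf{succ})\,y;
\]
the recursor axioms $\mathbf{r}xy0 = x$ and $\mathbf{r}xy(Sz) = yz(\mathbf{r}xyz)$ then yield $\hat{+}\,x\,0 \simeq x$ and $\hat{+}\,x\,(Sz) \simeq S(\hat{+}\,x\,z)$, and an internal induction on $y$ using the induction scheme of $\HAP$ gives $\hat{+}\,x\,y = x + y$. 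An entirely analogous construction, substituting $\hat{+}$ for $\mathbf{succ}$ in the recursor step, produces $\hat{\times}$. I would then extend the recursion by setting $\lambda x.\,F(t_1,\ldots,t_n) := \lambda x.\,(\hat F\,t_1\cdots t_n)$ for $F \in \{S,+,\times\}$, and verify the two properties using the applicative case together with the Leibniz rule applied to the equations $\hat F\,t_1\cdots t_n = F(t_1,\ldots,t_n)$.

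I expect the main obstacle to be one of careful stratification rather than intrinsic difficulty: the combinators $\hat{+}$ and $\hat{\times}$ must be built \emph{after} the applicative bracket abstraction has been established but \emph{before} the lemma is claimed for the full language of $\HAP$, so the proof naturally splits into three layers --- the applicative recursion, the construction of $\hat{+}$ and $\hat{\times}$ by internal induction in $\HAP$, and the final reduction of the function-symbol case to the applicative one. Beyond this layering, everything reduces to routine verifications using strictness together with the combinator axioms; the choice to keep the $\mathbf{k}$-clause restricted to leaves (and always use the $\mathbf{s}$-clause on compound applicative subterms) is what ensures $\lambda x.\,t\downarrow$ without ever needing $t\downarrow$ itself.
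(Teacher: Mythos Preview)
Your proposal is correct and follows essentially the same route as the paper: first establish bracket abstraction for the purely applicative fragment via the standard $\mathbf{s}$--$\mathbf{k}$ algorithm, then build combinators for $+$ and $\times$ from the recursor (the paper calls these $\mathbf{plus}$ and $\mathbf{times}$), and finally reduce the general case to the applicative one. Your presentation is in fact more explicit than the paper's on two points it leaves implicit --- the internal $\HAP$-induction needed to show $\hat{+}\,x\,y = x+y$, and the observation that restricting the $\mathbf{k}$-clause to leaves is precisely what makes $\lambda x.\,t\downarrow$ hold without assuming $t\downarrow$.
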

\begin{proof} In case $t$ is a term built from variables, application and combinators, then there is a well-known abstraction algorithm (see, for example, \cite[Proposition 9.3.5]{troelstravandalen88b} and also \cite[p. 423]{troelstra98}). We define $\lambda x.t$ by induction on the structure of $t$. If $t$ is $x$ itself, then $\lambda x.x$ is ${\bf skk}$, while if $t$ is a variable or constant different from $t$, then $\lambda x.t$ is ${\bf k}t$. Finally, if $t = t_1t_2$, then $\lambda x.t$ is ${\bf s}(\lambda x.t_1)(\lambda x.t_2)$.

Of course, general terms in $\HAP$ can also be built using the arithmetical operations $S, +, \times$. However, since for ${\bf succ}, {\bf plus} := \lambda xy.{\bf r}x(\lambda uv.{\bf succ} \cdot v)y$ and ${\bf times} := \lambda xy.{\bf r}0(\lambda uv.{\bf plus}vy)y$ the system $\HAP$ proves
\[ {\bf succ} \cdot x = Sx, \quad {\bf plus} \cdot x \cdot y = x + y, \quad {\bf times} \cdot x \cdot y = x \times y, \]
any term is provably equal in $\HAP$ to one built purely from variables, the application operation and combinators.
\end{proof}

\begin{prop}{simpleconservativity}
The system $\HAP$ is conservative over $\HA$.
\end{prop}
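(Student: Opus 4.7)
The plan is to build an interpretation of $\HAP$ into $\HA$ in which the underlying domain is $\mathbb{N}$, the arithmetical operations are interpreted as themselves, and the application operation $\cdot$ is interpreted as Kleene application on indices of partial recursive functions. Since $\HA$ can formalize Kleene's $T$-predicate, there is a $\Sigma^0_1$-formula $\text{App}(e, x, y)$ expressing ``$\{e\}(x) \simeq y$''. Using this we associate to every term $t(\tup z)$ of $\HAP$ an $\HA$-formula $T_t(\tup z, y)$ (the ``graph'' of $t$), defined by recursion on $t$: variables translate to equality, arithmetical operations are interpreted natively, and an application $t_1 \cdot t_2$ is translated by composing the graphs with $\text{App}$. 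The formula translation $\varphi \mapsto \varphi^*$ is then defined by replacing each atomic formula $s = t$ by $\exists y\,(T_s(\tup z, y) \wedge T_t(\tup z, y))$, preserving connectives, and leaving quantifiers as ranging over $\mathbb{N}$.

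The key verification consists of three parts. \emph{Logic:} one checks that the axioms and rules of $\LPT$ are sound under this translation; the nontrivial cases are the quantifier rules, which require that $(t\downarrow)^* \equiv \exists y\,T_t(\tup z, y)$ is used as the side condition, and the Leibniz schemata for $\simeq$. \emph{Combinators:} one exhibits natural numbers $k, s, p, p_0, p_1, \text{succ}, r$ in $\HA$ and verifies, using the $S^m_n$-theorem (which is formalizable in $\HA$), that they realize the required equations; for instance, $\{s\}(x)(y)(z) \simeq \{x\}(z)(\{y\}(z))$, and similarly for the recursor, which requires a primitive-recursive construction together with the recursion theorem. \emph{Arithmetic and induction:} the arithmetical axioms hold by definition, and induction on $\HAP$-formulas reduces to induction in $\HA$ on their translations.

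The crucial observation is then that if $\varphi$ is an arithmetical $\HAP$-formula (built from the arithmetical operations and relations, without $\cdot$ or combinators), its translation $\varphi^*$ is provably equivalent in $\HA$ to $\varphi$ itself, since all the term-graphs $T_t$ become functional in the trivial sense. Hence if $\HAP \vdash \varphi$ for arithmetical $\varphi$, then $\HA \vdash \varphi^* \leftrightarrow \varphi$ and $\HA \vdash \varphi^*$, so $\HA \vdash \varphi$, giving conservativity.

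The main obstacle will be the explicit construction of indices for the combinators, in particular the recursor $\bf r$, and the bookkeeping needed to verify the partial equality ${\bf s}xyz \simeq xz(yz)$ correctly inside $\HA$ — one must ensure that the translation of $\simeq$ behaves symmetrically with respect to divergence on both sides. This is a standard but delicate exercise, and as \refrema{addingdefpartialfunctions} suggests, it is cleaner to first set up once and for all that the $T$-predicate yields a partial application operation in $\HA$ via a conservative extension, and then realize each combinator as a concrete index using $S^m_n$ and the recursion theorem.
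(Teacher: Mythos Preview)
Your proposal is correct and follows exactly the approach of the paper: interpret $\HAP$ in $\HA$ by letting the domain be $\mathbb{N}$, keeping the arithmetical operations fixed, and interpreting application as Kleene application, using that basic recursion theory (the $T$-predicate, $S^m_n$, the recursion theorem) is available in $\HA$. The paper's own proof is a two-line sketch deferring the details to \cite[Proposition 9.3.12]{troelstravandalen88b}, whereas you have spelled out the translation of terms via graph formulas and the verification of the $\LPT$ axioms and combinator equations explicitly; but the underlying idea is identical.
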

\begin{proof} Indeed, we can interpret $\HAP$ in $\HA$ by exploiting the fact that one can develop basic recursion theory inside $\HA$. This allows one to interpret $x \cdot y$ as Kleene application: that is, $x \cdot y$ is the result of the partial recursive function coded by $x$ on input $y$ (whenever this is defined). More details can be found in, for instance, \cite[Proposition 9.3.12]{troelstravandalen88b}.
\end{proof}

\subsection{The system $\HAP_\varepsilon$} In the remainder of this section we will study an extension of $\HAP$. This extension, which we will call $\HAP_\varepsilon$, is obtained from $\HAP$ by adding for each arithmetical formula $\varphi(x,y)$ a constant $\varepsilon_\varphi$ as well as the following axioms:
\[ \exists y \, \varphi(x, y) \to \varepsilon_\varphi \cdot x \downarrow, \quad \varepsilon_\varphi \cdot x \downarrow \to \varphi(x, \varepsilon_\varphi \cdot x) \]
The goal of this subsection is to prove that the resulting system is still conservative over $\HA$.

\begin{prop}{conservativityofHAPEoverHAP}
Suppose $\psi(x, y)$ is a formula in the language of $\HAP$ and suppose $\HAP_f$ is the extension of $\HAP$ with a function symbol $f$ and the following axioms:
\[ \exists y \, \psi(x, y) \to f(x) \downarrow, \quad f(x) \downarrow \to \psi(x, f(x))  \]
Then $\HAP_f$ is conservative over $\HAP$.
\end{prop}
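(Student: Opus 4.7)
The plan is to establish the conservativity by a forcing interpretation of $\HAP_f$ into $\HAP$ whose conditions are finite partial Skolem approximations for $\psi$. Working inside $\HAP$, I will define a poset $P$ whose elements are (codes of) finite sequences $p = \langle (x_1,y_1),\ldots,(x_n,y_n)\rangle$ of pairs, with the $x_i$ distinct and $\psi(x_i,y_i)$ holding for each $i$; the order is given by extension. Using the pairing combinators and the lambda abstraction available in $\HAP$, this poset together with the associated operations of lookup, extension and domain membership are all definable by $\HAP$-formulas (and where appropriate as partial $\HAP$-terms).

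Next I will define a forcing relation $p \Vdash \varphi$ on $\HAP_f$-formulas by recursion on $\varphi$. For atomic $\HAP$-formulas (not involving $f$), $p \Vdash \varphi$ is just $\varphi$; for propositional connectives and quantifiers I follow the usual Kripke pattern, e.g.\ $p \Vdash \varphi \to \chi$ iff for every $q \leq p$, $q \Vdash \varphi$ implies $q \Vdash \chi$. The crucial clauses concern $f$: I first rewrite each $\HAP_f$-formula so that every subterm of the form $f(t)$ occurs only in atomic contexts (by introducing auxiliary existentially bound variables for any inner occurrences), and then set $p \Vdash f(t) \downarrow$ iff for every $q \leq p$ there exists some $r \leq q$ with $t \in \mathrm{dom}(r)$ --- a Beth-style density clause --- and $p \Vdash f(t) = s$ iff $t, s \downarrow$ and $p(t) = s$.

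The main work is then the soundness theorem: if $\HAP_f \vdash \varphi$, then $\HAP \vdash \forall p \in P\,(p \Vdash \varphi)$, proved by induction on the $\HAP_f$-derivation. The $\LPT$- and $\HAP$-axioms are handled by standard monotonicity arguments; the two new axioms for $f$ are the heart of the verification. For $f(x) \downarrow \to \psi(x,f(x))$: if $p \Vdash f(x)\downarrow$ then density provides some $r \leq p$ with $(x,y) \in r$ for some $y$, and $\psi(x,y)$ holds by the definition of $P$. For $\exists y\,\psi(x,y) \to f(x)\downarrow$: given any witness $y$ one can always adjoin $(x,y)$ to an extension of $p$, directly verifying the density clause. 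Conservativity follows because a separate induction on the structure of $\HAP$-formulas shows $p \Vdash \varphi \leftrightarrow \varphi$ for every such $\varphi$, whence specialising a provable $\HAP$-formula to the empty condition finishes the argument.

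The main obstacle I expect is the careful handling of the interaction between the forcing clauses and the partial-term discipline of $\LPT$: ensuring that $p \Vdash t\downarrow$ is persistent under extension and that substitution is compatible with forcing, especially for compound terms containing several occurrences of $f$. Getting the existence and equality clauses right, so that both monotonicity of forcing and the totality axiom for $f$ are simultaneously valid, is the delicate point; the flattening procedure described above is intended to reduce this to the atomic case, where the definition is transparent.
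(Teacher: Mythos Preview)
Your approach is essentially the paper's: the same poset of finite partial Skolem approximations, the same Beth-style forcing, and the same verification of the two new axioms. The one structural difference is that the paper does not force with the function symbol $f$ directly. Instead it first invokes the general fact (their remark on definable partial functions in $\LPT$) that adding $f$ is conservative over adding a \emph{relation} symbol $F$ with axioms
\[
F(x,y)\land F(x,y')\to y=y',\qquad \exists y\,\psi(x,y)\to\exists y\,F(x,y),\qquad F(x,y)\to\psi(x,y),
\]
and then forces only over the extension $\HAP_F$, with the single new atomic clause $p\Vdash F(x,y)\ :\Leftrightarrow\ (\forall q\le p)(\exists r\le q)\,(x,y)\in r$. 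This completely sidesteps the flattening of compound $f$-terms and the $\LPT$ bookkeeping you flag as the main obstacle.

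There is one concrete problem with your clauses as written. You put $p\Vdash f(t)=s$ to mean $p(t)=s$, but give $p\Vdash f(t)\!\downarrow$ the density reading. Since in $\LPT$ the predicate $t\!\downarrow$ is \emph{defined} as $t=t$, these two clauses must agree once $f(t)=f(t)$ is flattened to $\exists u\,(f(t)=u)$; they do so only if your existential clause is itself the Beth density clause. But with Beth clauses for $\exists$ (and $\lor$), soundness of the elimination rules requires the local-character property
\[
\bigl(\,(\forall q\le p)(\exists r\le q)\ r\Vdash\chi\,\bigr)\ \to\ p\Vdash\chi
\]
for every formula $\chi$, and your atomic clause $p\Vdash f(t)=s\Leftrightarrow p(t)=s$ fails this: take any $p$ with $t\notin\mathrm{dom}(p)$ for which $s$ happens to be the unique $y$ with $\psi(t,y)$. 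The fix is simply to make $p\Vdash f(t)=s$ a density clause as well---which is exactly what the paper's clause for $F(x,y)$ does---or, more cleanly, to adopt the paper's reduction to a relation symbol and avoid the term-level issues altogether.
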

\begin{proof}
In view of \refrema{addingdefpartialfunctions} it suffices to show that we can conservatively add to $\HAP$ a relation symbol $F$ satisfying the formulas
\[ F(x, y) \land F(x, y') \to y = y', \quad  \exists y \, \psi(x, y) \to \exists y \, F(x, y), \quad F(x, y) \to \psi(x, y).  \]
Let us call this system $\HAP_F$. To show the conservativity of this extension we use forcing, with as forcing conditions finite approximations to the relation $F$. To be precise, a condition $p$ is a (coded) finite sequence of pairs \[ < (x_0, y_0), \ldots, (x_{n-1}, y_{n-1}) > \] such that $\psi(x_i, y_i)$ holds for any $i \lt n$ and all $x_i$ are distinct. We will write $q \leq p$ if $p$ is an initial segment of $q$, and $(x, y) \in p$ if there is some $i \lt n$ such that $(x, y) = (x_i, y_i)$.

For any $\HAP_F$-formula $\varphi$ we define a $\HAP$-formula $p \Vdash \varphi$ by induction on $\varphi$, as follows:
\begin{eqnarray*}
p \Vdash \varphi & := & \varphi \mbox{ if $\varphi$ is an atomic $\HAP$\mbox-formula} \\
p \Vdash F(x, y) &:= & (\forall q \leq p) \, (\exists r \leq q) \, (x, y) \in r \\
p \Vdash \varphi \land \psi & := & p \Vdash \varphi \land p \Vdash \psi \\
p \Vdash \varphi \lor \psi &:= & (\forall q \leq p) \, (\exists r \leq q) \, \big( \, r \Vdash \varphi \lor r \Vdash \psi \, \big) \\
p \Vdash \varphi \to \psi & := & (\forall q \leq p) \, ( \, q \Vdash \varphi \to q \Vdash \psi) \\
p \Vdash \forall x \, \varphi(x)  &:= & (\forall x) \, (\forall q \leq p)  \, q \Vdash \varphi(x) \\
p \Vdash \exists x \, \varphi(x) & := & (\forall q \leq p) \, (\exists r \leq q) \, (\exists x) \, r \Vdash \varphi(x).
\end{eqnarray*}
Note that we have \begin{displaymath}
\begin{array}{l}
\HAP \vdash p \leq q \land q \Vdash \varphi \to p \Vdash \varphi \mbox{ and } \\
\HAP \vdash \big( \, (\forall q \leq p) \, (\exists r \leq q) \, r \Vdash \varphi \, \big) \, \to p \Vdash \varphi
\end{array}
\end{displaymath} for all $\HAP_F$-formulas $\varphi$ and \begin{equation} \label{forcingabsolute} \HAP \vdash \big( \, p \Vdash \varphi \, \big) \leftrightarrow \varphi \end{equation} if $\varphi$ is a $\HAP$-formula. The idea now is to prove
\[ \HAP_F \vdash \varphi \Longrightarrow \HAP \vdash p \Vdash \varphi \]
by induction on the derivation of $\varphi$ in $\HAP_F$. We leave the verification of the $\HAP$-axioms to the reader and only check that the interpretations of the axioms we have added to $\HAP$ are provable in $\HAP$; for this we reason in $\HAP$.

Suppose $q \Vdash F(x, y)$ and $q \Vdash F(x, y')$. Then there is some $r \leq q$ such that $(x, y) \in r$ and $(x, y') \in r$. Because $r$ is a condition, we must have $y = y'$ and therefore $q \Vdash y = y'$. We  conclude that we have $p \Vdash F(x, y) \land F(x, y') \to y = y'$ for every condition $p$.

Suppose $p' \Vdash \exists y \, \psi(x, y)$. Our aim is to show $p' \Vdash \exists y \, F(x, y)$, so suppose $q' \leq p'$. Then there must be some $r \leq q'$ and some $y'$ such that $r \Vdash \psi(x,y')$. Now there are two possibilities: either there is some $y$ such that $(x, y) \in r$, or no such $y$ exists (we are using here that equality in $\HAP$ is decidable). In the former case we have $r \Vdash F(x, y)$; in the latter, we use  (\ref{forcingabsolute}) to deduce that $\psi(x, y')$ holds. We extend $r$ by appending the pair $(x, y')$ to obtain some new condition $r' \leq r$. For this condition $r'$ we have $r' \Vdash F(x, y')$. So in both cases there is some condition $r' \leq q'$ and some $y'$ such that $r' \Vdash F(x, y')$. We conclude that $p' \Vdash \exists y \, F(x, y)$ and hence that we have $p \Vdash \exists y \, \psi(x, y) \to \exists y \, F(x, y)$ for every condition $p$.

Finally, if $q \Vdash F(x, y)$, then $(x, y) \in r$ for some $r \leq q$. Since $r$ is a condition, we have $\psi(x, y)$ and hence $q \Vdash \psi(x, y)$ by (\ref{forcingabsolute}). So $p \Vdash F(x, y) \to \psi(x, y)$ for every condition $p$.
\end{proof}

\begin{lemm}{oraclesforHAP}
Suppose $\psi(x, y)$ is an arithmetical formula in the language of $\HAP$ and suppose $\HAP_{\bf f}$ is the extension of $\HAP$ with a constant ${\bf f}$ and the following axioms:
\[ \exists y \, \psi(x, y) \to {\bf f} \cdot x \downarrow, \quad {\bf f} \cdot x \downarrow \to \psi(x, {\bf f} \cdot x)  \]
Then $\HAP_{\bf f}$ is conservative over $\HA$.
\end{lemm}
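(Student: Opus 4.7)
The plan is to combine \refprop{conservativityofHAPEoverHAP} with the idea of reinterpreting the application operation hinted at in \refrema{comparisonwithEONandAPP}(c). In broad terms: we pass from $\HAP_{\bf f}$ to the theory $\HAP_f$ (which has an honest function symbol $f$ in place of the constant $\bf f$) by means of an interpretation that reinterprets $\cdot$ as $f$-relative Kleene application, and then invoke the earlier results to land inside $\HA$.

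First I would apply \refprop{conservativityofHAPEoverHAP} to the given arithmetical formula $\psi$: since every arithmetical formula is in particular a $\HAP$-formula, this produces a theory $\HAP_f$ with a function symbol $f$ satisfying $\exists y \, \psi(x,y) \to f(x) \downarrow$ and $f(x) \downarrow \to \psi(x, f(x))$, and $\HAP_f$ is conservative over $\HAP$, hence over $\HA$ by \refprop{simpleconservativity}.

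Next, working inside $\HAP_f$, I would define a new partial application operation $x \bullet y$ as $f$-relative Kleene application: viewing $x$ as a code for an oracle partial recursive procedure, $x \bullet y \simeq z$ precisely when the $x$-th such procedure, run on input $y$ with oracle $f$, terminates with value $z$. Since $\psi$ is arithmetical and the axioms on $f$ pin down the graph of $f$ arithmetically, the graph of $\bullet$ is expressible by an arithmetical formula of $\HAP_f$, and may therefore be adjoined as a partial binary function symbol by \refrema{addingdefpartialfunctions}. The standard abstraction construction for Kleene application relativises without essential change, producing elements ${\bf k}'$, ${\bf s}'$, ${\bf p}'$, ${\bf p}'_0$, ${\bf p}'_1$, ${\bf succ}'$, ${\bf r}'$ that satisfy all combinator axioms of $\HAP$ with $\cdot$ replaced by $\bullet$, together with a code $e$ for which $e \bullet x \simeq f(x)$ for every $x$.

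Finally, I would define a translation $(\cdot)^\ast$ from $\HAP_{\bf f}$-formulas to formulas in the extended $\HAP_f$-language by fixing $0, S, +, \times$ and all logical structure, sending the application symbol $\cdot$ to $\bullet$, each combinator to its primed counterpart, and $\bf f$ to $e$; arithmetical formulas then translate to themselves. Each axiom of $\HAP_{\bf f}$ translates to a theorem of $\HAP_f$: the combinator axioms by construction of the primed combinators; the two axioms for $\bf f$ from $e \bullet x \simeq f(x)$ together with the defining axioms on $f$; and the induction scheme from the induction scheme of $\HAP_f$, which already covers formulas in the extended language. Consequently, any arithmetical $\varphi$ provable in $\HAP_{\bf f}$ is provable in $\HAP_f$, hence in $\HAP$, and hence in $\HA$. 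The main obstacle is the formalisation of $f$-relative Kleene recursion theory inside $\HAP_f$ and the explicit verification of all the combinator axioms for $\bullet$; this is conceptually routine, but carrying it out rigorously requires formalising oracle partial recursive computations inside arithmetic, something made possible precisely because the graph of $f$ is arithmetically definable.
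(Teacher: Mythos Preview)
Your proposal is correct and follows the same overall architecture as the paper: pass to $\HAP_f$ via \refprop{conservativityofHAPEoverHAP}, then interpret $\HAP_{\bf f}$ inside $\HAP_f$ by redefining application so that $f$ becomes representable by a constant, leaving the arithmetical primitives untouched so that $\psi$ is unaffected.

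The one substantive difference is in how the new application is built. You reach for $f$-relative Kleene application, defined purely from the arithmetical primitives and $f$; this works, but (as you acknowledge) it requires formalising oracle recursion theory inside arithmetic and re-verifying all the combinator axioms. The paper instead invokes van Oosten's abstract construction \cite{vanoosten06}: a new application $a \cdot_f b$ is defined in terms of the \emph{existing} application $\cdot$ of $\HAP$ together with $f$, via $f$-dialogues (finite sequences of oracle queries and answers). This operates at the level of an arbitrary partial combinatory algebra, so the existence of suitable combinators comes for free from the general result, and no recursion theory needs to be redeveloped. Either route yields the lemma; the paper's choice is the advertised simplification.
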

\begin{proof}
The idea is to work in the system $\HAP_f$ from \refprop{conservativityofHAPEoverHAP} and redefine the application in $\HAP_f$ in such a way that we can use the partial function $f$ as an oracle. This has the desired effect of making the function $f$ representable. How this can be done is worked out in \cite[Theorem 2.2]{vanoosten06}. Let us just recall from this paper how one redefines the application.

For any $a, b$ an \emph{$f$-dialogue} between $a$ and $b$ is defined to be a code of a sequence $u = < u_0, \ldots, u_{n-1}>$ such that for all $i \lt n$ there is a $v_i$ such that
\[ a \cdot (<b> * u^{\lt i}) = {\bf p} \bot v_i \quad \mbox{ and } f(v_i) = u_i. \]
We say that the new application $a \cdot_f b$ is defined with value $c$ if there is an $f$-dialogue $u$ between $a$ and $b$ such that
\[ a \cdot (<b> * u) = {\bf p}\top c. \]
Here $<b>$ is the sequence consisting only of $b$, * stands for concatenation and $u^{\lt i}$ denotes $< u_0, \ldots, u_{i-1} >$ whenever $u$ codes some finite sequence. In addition, $\bot$ and $\top$ are assumed to be some choice for the booleans for which there is an if-then-else construct ${\bf d}$ such that ${\bf d}\top xy = x$ and ${\bf d}\bot xy = y$ (for example, $\bot = \lambda xy.y$ and $\top = {\bf k}$ and ${\bf d} = \lambda xyz.xyz$.)

To correctly interpret the $\HAP$-axioms the interpretation of the combinators needs to change as well (again, see \cite[Theorem 2.2]{vanoosten06}), but, crucially, the interpretations of the arithmetical operations can remain the same. Therefore the interpretation of $\psi$ is unaffected and the system remains conservative over $\HA$.
\end{proof}

\begin{theo}{HAPepsilonconservative}
The system $\HAP_\varepsilon$ is conservative over $\HA$.
\end{theo}
\begin{proof}
It suffices to prove that for each finite set of formulas $\varphi_1,\ldots,\varphi_n$ adding the $\varepsilon_{\varphi_i}$ together with their axioms is conservative over $\HAP$, since each proof in $\HAP$ uses only finitely many symbols and finitely many axioms. In fact, by considering
\[ \varphi(x, y) := \bigwedge_{i=1}^n \, \big( \, {\bf p}_0x = i \to \varphi_i({\bf p}_1x, y) \, \big) \]
one sees that it suffices to prove this for extensions with a single combinator $\varepsilon_\varphi$ only. But the statement that adding a single combinator of that form results in system conservative over $\HA$ is precisely \reflemm{oraclesforHAP}.
\end{proof}

\section{Realizability for $\HAP$}

Following Feferman \cite{feferman75,feferman79} we define abstract realizability interpretations of $\HAP$ into itself. In this section it will be convenient to regard disjunction as a defined connective, as follows:
\[ \varphi \lor \psi := \exists n \, ( \, (n = 0 \to \varphi) \land (n \not= 0 \to \psi) \, ). \]

\begin{defi}{realizabilityforAPP} (Feferman) For each $\HAP$-formula $\varphi$ we define a new $\HAP$-formula $x \abr \varphi$ (``$x$ realizes $\varphi$'') by induction on the structure of $\varphi$ as follows:
\begin{eqnarray*}
x \abr \varphi & := & \varphi \quad \mbox{ if } \varphi \mbox{ is atomic } \\
x \abr (\varphi \land \psi) & := & {\bf p}_0x \abr \varphi \land {\bf p}_1x \abr \psi \\
x \abr (\varphi \to \psi) & := & \forall y \, ( y \abr \varphi \to x \cdot y \downarrow \land \, x \cdot y \abr \psi)  \\
x \abr \forall y \, \varphi & := & \forall y \, ( \, x \cdot y \downarrow \land \, x \cdot y \abr \varphi) \\
x \abr \exists y \, \varphi & := & {\bf p}_1 x \abr \varphi[{\bf p}_0x/y]
\end{eqnarray*}
\end{defi}

\begin{theo}{soundnessforabstractrealizability}
If $\HAP \vdash \varphi(\tup y)$, then $\HAP \vdash \exists x \, \forall \tup y \, \big( \, x \cdot \tup y \abr \varphi(\tup y) \, \big)$.
\end{theo}
\begin{proof}
See, for example, \cite[Theorem VII.1.5]{beeson85}.
\end{proof}

In what follows we will also need an extensional variant of this abstract form of realizability. In this form of realizability the collection of realizers of a fixed formula $\varphi$ carries an equivalence relation, the intuition being that $x$ and $x'$ are equivalent if ``$x$ and $x'$ are identical as realizers of $\varphi$''. Crucially, realizers of an implication $\varphi \to \psi$ are required to send realizers which are equal as realizers of $\varphi$ to realizers which are equal as realizers of $\psi$.

\begin{defi}{extrealizabilityforAPP} (See \cite[Definition 6.1]{troelstra98} and \cite{vanoosten97}.) For each $\HAP$-formula $\varphi$ we define new $\HAP$-formulas $x \aber \varphi$ (``$x$ extensionally realizes $\varphi$'') and $x = x' \aber \varphi$ (``$x$ and $x'$ are identical as extensional realizers of $\varphi$'') by simultaneous induction on the structure of $\varphi$ as follows:
\begin{eqnarray*}
x \aber \varphi & := & \varphi \quad \mbox{ if } \varphi \mbox{ is atomic } \\
x = x' \aber \varphi & := & \varphi \land x = x' \quad \mbox{ if } \varphi \mbox{ is atomic } \\
x \aber (\varphi \land \psi) & := & {\bf p}_0 x \aber \varphi \land {\bf p}_1x \aber \psi \\
x = x' \aber (\varphi \land \psi) & := & {\bf p}_0x = {\bf p}_0x' \aber \varphi \land {\bf p}_1 x =  {\bf p}_1x' \aber \psi \\
x \aber (\varphi \to \psi) & := & \forall y, y' \, ( \, y = y' \aber \varphi \to x \cdot y \downarrow \land x \cdot y' \downarrow \land \, x \cdot y = x \cdot y' \aber \psi \, )\\
x = x' \aber (\varphi \to \psi) & := & x \aber (\varphi \to \psi) \land x' \aber (\varphi \to \psi) \land \forall y \, ( y \aber \varphi \to x \cdot y = x' \cdot y \aber \psi \, ) \\
x \aber \forall y \, \varphi & := & \forall y \, ( \, x \cdot y \downarrow \land \, x \cdot y \aber \varphi \, )\\
x = x' \aber \forall y \, \varphi & := & x \aber \forall y \, \varphi \land x' \aber \forall y \, \varphi \land \forall y \, ( \, x \cdot y = x' \cdot y \aber \varphi) \\
x \aber \exists y \, \varphi & := & {\bf p}_1x \aber \varphi[{\bf p}_0x/y] \\
x = x' \aber \exists y \, \varphi & := & {\bf p}_1 x = {\bf p}_1x' \aber \varphi[{\bf p}_0x/y] \land {\bf p}_0x = {\bf p}_0x'
\end{eqnarray*}
\end{defi}
One shows by induction on the structure of $\varphi$ that provably in $\HAP$ the relation $x = y \aber \varphi$ is symmetric and transitive and $x \aber \varphi$ is equivalent to $x = x \aber \varphi$. In addition, we have:

\begin{theo}{soundnessforabstractextrealizability}
If $\HAP \vdash \varphi(\tup y)$, then $\HAP \vdash \exists x \, \forall \tup y \, \big( \, x \cdot \tup y \aber \varphi(\tup y) \, \big)$.
\end{theo}
\begin{proof}
Routine.
\end{proof}

For the main theorem of this section we return to the system $\HAP_\varepsilon$.
\begin{theo}{APPepsilonselfrealizing}
The system $\HAP_\varepsilon$ proves $\varphi \leftrightarrow \exists x \, x \abr \varphi \leftrightarrow \exists x \, x \aber \varphi$ for every arithmetical formula $\varphi$.
\end{theo}
\begin{proof}
In $\HAP_\varepsilon$ we can define for every arithmetical formula $\varphi$ with free variables $\tup x$ a ``canonical realizer'' $j_\varphi$, as follows:
\begin{eqnarray*}
j_\varphi & := & \lambda \tup x. 0 \mbox{ if } \varphi \mbox{ is atomic and arithmetical } \\
j_{\varphi \land \psi} & := & \lambda \tup x.{\bf p}(j_\varphi
\cdot \tup x )(j_\psi \cdot \tup x) \\
j_{\varphi \to \psi} & := & \lambda \tup x. \lambda y. (j_\psi \cdot \tup x) \\
j_{\forall y \, \varphi } & := & \lambda \tup x. \lambda y. j_{\varphi} \cdot \tup x \cdot y \\
j_{\exists y \, \varphi } & := & \lambda \tup x. {\bf p}(\varepsilon_\varphi \cdot \tup x)(j_\varphi \cdot \tup x \cdot (\varepsilon_\varphi \cdot \tup x))
\end{eqnarray*}
One can now prove \[ \HAP_\varepsilon \vdash \varphi \leftrightarrow \exists x \, x \abr \varphi \leftrightarrow j_\varphi \cdot \tup x \abr \varphi, \]
as well as
 \[ \HAP_\varepsilon \vdash \varphi \leftrightarrow \exists x \, x \aber \varphi \leftrightarrow j_\varphi \cdot \tup x \aber \varphi, \]
by induction on $\varphi$, assuming that $\tup x$ lists all free variables in $\varphi$.
\end{proof}

\begin{coro}{selfrealizingarithmeticalconservative}
Let $\Hcap$ be either $\HAP$ plus the schema $\varphi \leftrightarrow \exists x \, x \abr \varphi$ for all arithmetical $\varphi$, or $\HAP$ plus the schema $\varphi \leftrightarrow \exists x \, x \aber \varphi$ for all arithmetical $\varphi$. Then $\Hcap$ is conservative over $\HA$.
\end{coro}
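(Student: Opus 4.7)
The plan is to observe that $\Hcap$ sits as a subtheory of $\HAP_\varepsilon$ and then invoke the conservativity result already established in \refthim{HAPepsilonconservative}. This reduces the corollary to a short syntactic check rather than a new proof-theoretic construction.

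More concretely, I would proceed as follows. First, note that every axiom of $\HAP$ is an axiom of $\HAP_\varepsilon$, so all that needs to be verified is that the added schemata
\[ \varphi \leftrightarrow \exists x \, x \abr \varphi \quad \text{and} \quad \varphi \leftrightarrow \exists x \, x \aber \varphi \]
for arithmetical $\varphi$ are theorems of $\HAP_\varepsilon$. But this is precisely the content of \refthim{APPepsilonselfrealizing}, which delivers in $\HAP_\varepsilon$ a canonical realizer $j_\varphi$ witnessing both equivalences uniformly in the free variables of $\varphi$. Consequently, every theorem of $\Hcap$ is a theorem of $\HAP_\varepsilon$.

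Second, since $\HAP_\varepsilon$ is conservative over $\HA$ by \refthim{HAPepsilonconservative}, any sentence $\psi$ in the language of $\HA$ that is provable in $\Hcap$ is, a fortiori, provable in $\HAP_\varepsilon$ and therefore provable in $\HA$. This gives conservativity of $\Hcap$ over $\HA$ in either of the two formulations.

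There is essentially no obstacle here, since the heavy lifting has already been done: the construction of the canonical realizer (which required the existence of the $\varepsilon_\varphi$ at existential steps) was completed in \refthim{APPepsilonselfrealizing}, and the conservativity of the parameterised system $\HAP_\varepsilon$ over $\HA$ has been reduced to \reflemm{oraclesforHAP} via forcing and van Oosten's reinterpretation of application. The only mild point worth flagging in the write-up is that the argument covers both variants of $\Hcap$ simultaneously, simply because \refthim{APPepsilonselfrealizing} establishes both self-realizability schemata in $\HAP_\varepsilon$ with the same witness $j_\varphi$.
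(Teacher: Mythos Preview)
Your proposal is correct and follows exactly the paper's own argument: the corollary is deduced by observing that $\Hcap$ is a subtheory of $\HAP_\varepsilon$ (by \reftheo{APPepsilonselfrealizing}) and then invoking the conservativity of $\HAP_\varepsilon$ over $\HA$ (\reftheo{HAPepsilonconservative}). The only cosmetic point is that your citations use an undefined macro; in this paper the relevant commands are \verb|\reftheo| and \verb|\reflemm|.
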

\begin{proof}
This follows from \reftheo{HAPepsilonconservative} and \reftheo{APPepsilonselfrealizing}.
\end{proof}

\section{Applications to systems in higher types}

In this section we will discuss applications to systems in higher types, in particular, Goodman's Theorem. Various versions of finite-type arithmetic exist and the differences tend to be subtle, so first we will explain the precise version we will be working with.

Our starting point is the system $\ha$ from \cite[pages 444-449]{troelstravandalen88b}. This is a system formulated in many-sorted intuitionistic logic, where the sorts are the finite types.
\begin{defi}{finitetypes} The \emph{finite types} are defined by induction as follows: 0 is a finite type, and if $\sigma$ and $\tau$ are finite types, then so are $\sigma \to \tau$ and $\sigma \times \tau$. The type 0 is the \emph{ground} or \emph{base type}, while the other types will be called \emph{higher types}.
\end{defi}
There will be infinitely many variables of each sort. In addition, there will be constants:
\begin{enumerate}
\item for each pair of types $\sigma, \tau$ a combinator ${\bf k}^{\sigma, \tau}$ of sort $\sigma \to (\tau \to \sigma)$.
\item for each triple of types $\rho, \sigma, \tau$ a combinator ${\bf s}^{\rho, \sigma, \tau}$ of type $(\rho \to (\sigma \to \tau)) \to ((\rho \to \sigma) \to (\rho \to \tau))$.
\item for each pair of types $\rho, \sigma$ combinators ${\bf p}^{\rho, \sigma}, {\bf p}^{\rho, \sigma}_0, {\bf p}^{\rho, \sigma}_1$ of types $\rho \to (\sigma \to \rho \times \sigma)$, $\rho \times \sigma \to \rho$ and $\rho \times \sigma \to \sigma$, respectively.
\item a constant 0 of type 0 and a constant $S$ of type $0 \to 0$.
\item for each type $\sigma$ a combinator ${\bf R}^\sigma$ (``the recursor'') of type $\sigma \to ((0 \to (\sigma \to \sigma)) \to (0 \to \sigma))$.
\end{enumerate}

\begin{defi}{terms}
The terms of $\ha$ are defined inductively as follows:
\begin{itemize}
\item each variable or constant of type $\sigma$ will be a term of type $\sigma$.
\item if $f$ is a term of type $\sigma \to \tau$ and $x$ is a term of type $\sigma$, then $fx$ is a term of type $\tau$.
\end{itemize}
\end{defi}
The convention is that application associates to the left, which means that an expression like $fxyz$ has to be read as $(((fx)y)z)$.

\begin{defi}{formulas}
The formulas of $\ha$ are defined inductively as follows:
\begin{itemize}
\item $\bot$ is a formula and if $s$ and $t$ are terms of the same type $\sigma$, then $s =_\sigma t$ is a formula.
\item if $\varphi$ and $\psi$ are formulas, then so are $\varphi \land \psi, \varphi \lor \psi, \varphi \to \psi$.
\item if $x$ is a variable of type $\sigma$ and $\varphi$ is a formula, then $\exists x^\sigma \, \varphi$ and $\forall x^\sigma \, \varphi$ are formulas.
\end{itemize}
\end{defi}

Finally, the axioms and rules of $\ha$ are:
\begin{enumerate}
\item[(i)] All the axioms and rules of many-sorted intuitionistic logic (say in Hilbert-style).
\item[(ii)] Equality is an equivalence relation at all types:
\[ x = x, \qquad x = y \to y = x, \qquad x = y \land y = z \to x = z \]
\item[(iii)] The congruence laws for equality at all types:
\[ f = g \to fx = gx, \qquad x = y \to fx = fy \]
\item[(iv)] The successor axioms:
\[ \lnot S(x) = 0, \qquad S(x) = S(y) \rightarrow x = y \]
\item[(v)] For any formula $\varphi$ in the language of $\ha$, the induction axiom:
    \[ \varphi(0, \tup y) \to \big( \, \forall x^0 \, ( \, \varphi(x, \tup y) \to \varphi(Sx, \tup y) \, ) \to \forall x^0 \, \varphi(x, \tup y) \, \big). \]
\item[(vi)] The axioms for the combinators:
\begin{eqnarray*}
{\bf k} x y & = & x \\
{\bf s} x y z & = & xz(yz) \\
{\bf p}_0({\bf p}xy) & = & x \\
{\bf p}_1({\bf p}xy) & = & y \\
{\bf p}({\bf p}_0x)({\bf p}_1x) & = & x
\end{eqnarray*} as well as for the recursor:
\begin{eqnarray*}
{\bf R} x y 0 & = & x \\
{\bf R} x y (Sn) & = & yn({\bf R}xyn)
\end{eqnarray*}
\end{enumerate}
This completes the description of the system $\ha$.

\begin{prop}{lambdaabstractionforhaomega} For each term $t$ in the language of $\ha$ and variable $x$, one can construct a term $\lambda x.t$, whose free variables are those of $t$ excluding $x$, such that $\ha \vdash (\lambda x.t) x = t$.
\end{prop}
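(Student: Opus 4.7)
The plan is to mimic the abstraction algorithm used in \refprop{lambdaabstractionforHAP}, but now in the typed setting. Since the language of $\ha$ has only variables, the listed constants, and application (no built-in $+$ or $\times$ as in $\HAP$), every term is either a variable, a constant, or of the form $t_1 t_2$, so the induction has just three cases.

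Concretely, suppose $x$ has type $\sigma$ and $t$ has type $\tau$; I will define $\lambda x.t$ so that it has type $\sigma \to \tau$. First, if $t$ is the variable $x$ itself, set $\lambda x.x := {\bf s}^{\sigma,\sigma \to \sigma, \sigma} \, {\bf k}^{\sigma, \sigma \to \sigma} \, {\bf k}^{\sigma, \sigma}$; then the axioms for ${\bf s}$ and ${\bf k}$ give $(\lambda x.x) x = {\bf k}^{\sigma, \sigma \to \sigma} x ({\bf k}^{\sigma, \sigma} x) = x$. Second, if $t$ is a variable or constant distinct from $x$ (and so of some type $\tau$ not depending on $x$), set $\lambda x.t := {\bf k}^{\tau, \sigma} t$, so that $(\lambda x.t) x = t$ by the ${\bf k}$-axiom. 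Third, if $t = t_1 t_2$ with $t_1$ of type $\rho \to \tau$ and $t_2$ of type $\rho$, set $\lambda x.t := {\bf s}^{\sigma, \rho, \tau}(\lambda x.t_1)(\lambda x.t_2)$; the ${\bf s}$-axiom together with the inductive hypotheses $(\lambda x.t_i)x = t_i$ then yields $(\lambda x.t) x = (\lambda x.t_1) x \, ((\lambda x.t_2) x) = t_1 t_2 = t$.

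The free-variable condition is immediate from the definition, since in each clause the new constants ${\bf k}$ and ${\bf s}$ contribute no free variables and the recursive calls only pass through a term that did not originally contain $x$ on its own (case 2) or through subterms whose free variables are included in those of $t$ (cases 1 and 3). Type-checking is routine: in case 3 one only needs the specific instance ${\bf s}^{\sigma, \rho, \tau}$ determined by the types of $x$, $t_2$, and $t$, and one uses that the congruence rules (iii) for equality at all types let us apply functions to equal arguments inside the verification.

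I do not foresee a real obstacle; the argument is essentially the same as in the proof of \refprop{lambdaabstractionforHAP}, only simpler because $\ha$ is based on ordinary (total) many-sorted intuitionistic logic rather than $\LPT$, so there are no definedness side-conditions and the conclusion is strict equality $(\lambda x.t)x = t$ rather than the weak $\simeq$. The one thing to be mildly careful about is keeping track of the type superscripts on ${\bf s}$ and ${\bf k}$ so that every subterm type-checks; this is purely bookkeeping.
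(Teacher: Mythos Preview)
Your proposal is correct and follows exactly the approach the paper intends: the paper's own proof simply refers back to \refprop{lambdaabstractionforHAP}, and you have carried out precisely that abstraction algorithm in the typed setting, correctly noting that the absence of primitive $+,\times$ in $\ha$ makes the extra step from \refprop{lambdaabstractionforHAP} unnecessary. The type-annotation bookkeeping and the observation that totality removes the need for $\simeq$ are appropriate elaborations.
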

\begin{proof} As in \refprop{lambdaabstractionforhaomega}.
\end{proof}

Two extensions of $\ha$ will be important in what follows. One is the ``intensional'' variant where for each type $\sigma$ we have a combinator ${\bf e}^\sigma$ of type $\sigma \to (\sigma \to 0)$ satisfying
\[ {\bf e}^\sigma xy \leq 1, \qquad \forall x^\sigma, y^\sigma \, ( \, {\bf e}^\sigma xy = 0 \leftrightarrow x =_\sigma y \, ). \]
This extension of $\ha$ is denoted by $\iha$.

In addition, we have the extensional variant, where we have for all finite types $\sigma$ and $\tau$ the axiom
\[ \forall f, g^{\sigma \to \tau} \, ( \, \forall x^\sigma \, fx =_\tau gx \to f =_{\sigma \to \tau} g \, ). \]
This extension of $\ha$ is denoted by $\eha$.

This section mainly concerns the axiom of choice for all finite types, denoted by $\AC$, which is the following scheme:
\[ \forall x^\sigma \, \exists y^\tau \, \varphi(x, y, \tup z) \to \exists f^{\sigma \to \tau} \, \forall x^\sigma \, \varphi(x, fx, \tup z). \]
Indeed, the purpose of this section is to prove that both $\iha + \AC$ and $\eha + \AC$ are conservative over $\HA$ (Goodman's Theorem). Our strategy will be to show that these systems prove the same arithmetical sentences as $\HAP_\varepsilon$, by giving suitable interpretations of these systems into $\HAP$.

\subsection{Goodman's Theorem} There is a relatively straightforward interpretation of $\ha$ inside $\HAP$. The idea is to define, by induction on the finite type $\sigma$, a predicate ${\rm HRO}_\sigma$ which picks out those elements which are suitable for representing objects of type $\sigma$, as follows:
\begin{eqnarray*}
{\rm HRO}_0(x) & := & x = x \\
{\rm HRO}_{\sigma \times \tau}(x) & := & {\rm HRO}_\sigma({\bf p}_0x) \land {\rm HRO}_\tau({\bf p}_1x) \\
{\rm HRO}_{\sigma \to \tau}(x) & := & \forall y \, ( {\rm HRO}_\sigma(y) \to x \cdot y \downarrow \land \, {\rm HRO}_\tau( x \cdot y) \, )
\end{eqnarray*}
Moreover, equality at all types is interpreted as equality, the combinators ${\bf k}, {\bf s}, {\bf p}, {\bf p}_0, {\bf p}_1$ are interpreted as themselves, while ${\bf r}$ interprets ${\bf R}$ (indeed, in this way any term and any (atomic) formula in the language of $\ha$ can be seen as a term or formula in the language of $\HAP$, by forgetting the types and replacing the combinators in $\ha$ by their analogues in $\HAP$). In addition, we can define a closed term ${\bf e}$ in $\HAP$ such that
\[ {\bf e}xy \leq 1, \quad {\bf e}xy = 0 \leftrightarrow x = y. \]
From this it follows that we can interpret all of $\iha$ inside $\HAP$.

To also interpret $\AC$ we combine this idea with realizability.
\begin{defi}{realizabilityforHAinAPP}  For each $\ha$-formula $\varphi$ with free variables among $y_1^{\sigma_1},\ldots,y_n^{\sigma_n}$ we define a $\HAP$-formula $x \abr \varphi$ (``$x$ realizes $\varphi$'') with free variables among $x, y_1,\ldots,y_n$ by induction on the structure of $\varphi$ as follows:
\begin{eqnarray*}
x \abr \varphi & := & \varphi \mbox{ if } \varphi \mbox{ is atomic} \\
x \abr (\varphi \land \psi) & := & {\bf p}_0x \abr \varphi \land {\bf p}_1x \abr \psi \\
x \abr (\varphi \to \psi) & := & \forall y \, ( y \abr \varphi \to xy \downarrow \land \, xy \abr \psi) \\
x \abr \forall y^\sigma \, \varphi & := & \forall y \, ( {\rm HRO}_\sigma(y) \to \, x \cdot y \downarrow \land \, xy \abr \varphi) \\
x \abr \exists y^\sigma \, \varphi & := & {\rm HRO}^\sigma({\bf p}_0x) \land( {\bf p}_1 x \abr \varphi)[{\bf p}_0x/y]
\end{eqnarray*}
\end{defi}

\begin{theo}{interpretingACintensional} If $\iha + \AC \vdash \varphi(y_1^{\sigma_1},\ldots,y_n^{\sigma_n})$, then
\[ \HAP \vdash \exists x \, \forall y_1,\ldots, y_n \, \big( \, {\rm HRO}_{\sigma_1}(y_1) \land \ldots \land {\rm HRO}_{\sigma_n}(y_n) \to x \cdot y_1 \cdot \ldots \cdot y_n \abr \varphi \, \big). \]
\end{theo}
\begin{proof}
This is proved by induction of the derivation $\varphi(y_1^{\sigma_1},\ldots,y_n^{\sigma_n})$ inside $\iha + \AC$. Note that if  \[ t = \lambda \tup z.\lambda u.{\bf p}(\lambda x^\sigma.{\bf p}_0(ux), \lambda x^\sigma.{\bf p}_1(ux)), \]
then $\HAP \vdash t \abr \AC$.
\end{proof}

\begin{theo}{goodmanstheorem} {\rm (Goodman's Theorem)}
The system $\iha + \AC$ is conservative over $\HA$.
\end{theo}
\begin{proof}
If $\varphi$ is an arithmetical sentence, we have the following sequence of implications:
\begin{displaymath}
\begin{array}{lcr}
\iha + \AC \vdash \varphi & \Longrightarrow & \mbox{(\reftheo{interpretingACintensional}) } \\
\HAP \vdash \exists x \, x \abr \varphi & \Longrightarrow \\
\HAP_\varepsilon \vdash \exists x \, x \abr \varphi & \Longrightarrow & \mbox{(\reftheo{APPepsilonselfrealizing}) } \\
\HAP_\varepsilon \vdash \varphi & \Longrightarrow & \mbox{(\reftheo{HAPepsilonconservative}) } \\
\HA \vdash \varphi. & &
\end{array}
\end{displaymath}
\end{proof}

\subsection{An extensional version of Goodman's Theorem} To interpret $\eha + \AC$ inside $\HAP$ we need to make both HRO and the realizability interpretation from the previous subsection more extensional. As a first step, let us define HEO, an extensional variant of HRO, and indicate how it can be used to interpret $\eha$ inside $\HAP$.

To do this, we still interpret the combinators ${\bf k}, {\bf s}, {\bf p}, {\bf p}_0, {\bf p}_1$ as themselves, while ${\bf r}$ interprets ${\bf R}$. But we need a different criterion for when an object inside $\HAP$ is suitable for representing an object of type $\sigma$; also, we can no longer use equality in $\HAP$ to interpret equality at all finite types. To address this, define by induction on $\sigma$ the following provably symmetric and transitive relations in the language of $\HAP$:
\begin{eqnarray*}
x \sim_0 y & := & x = y \\
x \sim_{\sigma \times \tau} y &:= & {\bf p}_0 x \sim_\sigma {\bf p}_0 y \land {\bf p}_1 x \sim_\tau {\bf p}_1 y \\
x \sim_{\sigma \to \tau} y & := & \forall z, z' \, \big( \, z \sim_\sigma z' \to x \cdot z \downarrow \land x \cdot z' \downarrow \land y \cdot z \downarrow \land y \cdot z' \downarrow \land \\
& & x \cdot z \sim_\tau x \cdot z' \land y \cdot z  \sim_\tau y \cdot z' \land x \cdot z \sim_\tau y \cdot z \, \big).
\end{eqnarray*}
Now define ${\rm HEO}_\sigma(x)$ as $x \sim_\sigma x$. This yields an alternative way of selecting  elements $x$ in $\HAP$ which are suitable for representing objects of type $\sigma$. The point is that if we now interpret equality of objects of type $\sigma$ as $\sim_\sigma$, the result will be an interpretation of $\eha$ inside $\HAP$.

To interpret $\AC$ as well, we have to combine this with a suitably extensional form of realizability.
\begin{defi}{realizabilityforEHAinAPP} For each $\ha$-formula $\varphi$ with free variables among $y_1^{\sigma_1}, \ldots, y_n^{\sigma_n}$, we define new $\HAP$-formulas $x \aber \varphi$ (``$x$ extensionally realizes $\varphi$'') and $x = x' \aber \varphi$ (``$x$ and $x'$ are identical as extensional realizers of $\varphi$'') with free variables among $x, y_1,\ldots,y_n$ by simultaneous induction on the structure of $\varphi$ as follows:
\begin{eqnarray*}
x \aber \varphi & ;= & \varphi \mbox{ if } \varphi \mbox{ is atomic} \\
x = x' \aber \varphi & := & \varphi \land x = x' \\
x \aber (\varphi \land \psi) & := & {\bf p}_0 x \aber \varphi \land {\bf p}_1x \aber \psi \\
x = x' \aber (\varphi \land \psi) & := & {\bf p}_0x = {\bf p}_0x' \aber \varphi \land {\bf p}_1x = {\bf p}_1x' \aber \psi \\
x \aber (\varphi \to \psi) & := & \forall y, y' \, ( \, y = y' \aber \varphi \to xy \downarrow \land xy' \downarrow \land xy = xy' \aber \psi \, ) \\
x = x' \aber (\varphi \to \psi) & := & x \aber (\varphi \to \psi) \land x' \aber (\varphi \to \psi) \land \forall y  \, ( y \aber \varphi \to x y = x'y \aber \psi  \, ) \\
x \aber \forall y^\sigma \, \varphi & := & \forall y, y' \, ( \, y \sim_\sigma y' \to x \cdot y \downarrow \land x \cdot y' \downarrow \land \, x \cdot y = x \cdot y' \aber \varphi \, ) \\
x = x' \aber \forall y^\sigma \, \varphi & := & x \aber \forall y^\sigma \, \varphi \land x' \aber \forall y^\sigma \, \varphi \land \forall y \, ( \, {\rm HEO}_\sigma(y) \to x y = x' y \aber \varphi \, ) \\
x \aber \exists y^\sigma \, \varphi & := & {\rm HEO}_\sigma({\bf p}_0x) \land ({\bf p}_1 x \aber \varphi)[{\bf p}_0x/y] \\
x = x' \aber \exists y^\sigma \, \varphi & := & ({\bf p}_1 x = {\bf p}_1x' \aber \varphi)[{\bf p}_0x/y] \land {\bf p}_0x \sim_\sigma {\bf p}_0x'
\end{eqnarray*}
\end{defi}

\begin{lemm}{basicpropertiesofaber} For any $\ha$-formula $\varphi$ we can prove in $\HAP$ that:
\begin{enumerate}
\item $x \aber \varphi$ is equivalent to $x = x \aber \varphi$;
\item the relation $x = x' \aber \varphi$ is symmetric and transitive;
\item if  $x  \aber \varphi$ and $y \sim_{\sigma} y'$, then $(x  \aber \varphi)[y'/y]$;
\item if $x  = x'\aber \varphi$ and $y \sim_{\sigma} y'$, then $(x = x' \aber \varphi)[y'/y]$.
\end{enumerate}
\end{lemm}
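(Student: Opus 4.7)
The plan is to prove all four statements simultaneously by induction on the structure of $\varphi$, since the defining clauses for $x \aber \varphi$ and $x = x' \aber \varphi$ refer recursively to one another and statements (3) and (4) need to be available at proper subformulas in order to handle the quantifier cases.

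For the atomic base case, an $\ha$-atomic formula has the shape $s =_\sigma t$ (or $\bot$), which under the tacit HEO-translation into $\HAP$ becomes $s \sim_\sigma t$. Statement (1) reduces to the trivial equivalence $\varphi \Leftrightarrow \varphi \land x = x$, and statement (2) follows from symmetry and transitivity of $=$. For (3) and (4) the essential content is a substitution lemma for $\sim_\sigma$: if $z \sim_\tau z'$, then any $\HAP$-term $u(z)$ representing an $\ha$-term of type $\rho$ satisfies $u(z) \sim_\rho u(z')$. This is established by a side induction on $u$, using that each combinator is provably related to itself in the appropriate $\sim$ and that application respects $\sim$ by the very definition of $\sim_{\rho \to \rho'}$. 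Combined with the symmetry and transitivity of $\sim_\sigma$, this delivers (3) and (4) at the atomic level.

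The inductive cases for $\land$, $\forall y^\sigma$ and $\exists y^\sigma$ unwind to conjunctions of the corresponding clauses for the direct subformulas, and each of (1)--(4) follows by direct application of the inductive hypotheses together with the relevant clauses of the definition of $\sim_\sigma$. For example, in the $\forall y^\sigma$ case the definition quantifies over pairs $y \sim_\sigma y'$ and asserts $x \cdot y = x \cdot y' \aber \varphi$, so inductive (4) for $\varphi$ suffices to propagate any outer substitution through this clause.

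The main obstacle is the implication case, where the clause for $x = x' \aber (\varphi \to \psi)$ is not a parallel closure of the clause for $x \aber (\varphi \to \psi)$ but adds a separate pointwise conjunct. For (1), the non-trivial direction is $x \aber (\varphi \to \psi) \Rightarrow x = x \aber (\varphi \to \psi)$: the third conjunct $\forall y \, (y \aber \varphi \to xy = xy \aber \psi)$ is obtained by converting $y \aber \varphi$ to $y = y \aber \varphi$ via inductive (1) for $\varphi$ and then applying the hypothesis. For (2), symmetry and transitivity at $\psi$ (inductive (2)) transport through the added conjunct. For (3) and (4), one first uses inductive (3)/(4) for $\varphi$ in the reverse direction---noting that $z \sim_\tau z'$ yields $z' \sim_\tau z$ by the symmetry established in (2)---to convert a hypothesis about $\varphi[z'/z]$ back to one about $\varphi$, then applies the original realizer and propagates the substitution through the conclusion using inductive (3)/(4) for $\psi$. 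Careful bookkeeping is needed for the simultaneous induction to close, but no individual step is difficult.
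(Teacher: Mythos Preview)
Your proposal is correct and follows exactly the approach the paper indicates: a simultaneous induction on the structure of $\varphi$ establishing (1)--(4) together. The paper's own proof is a single sentence to that effect, so your write-up is considerably more detailed but not different in substance. One small slip: when you justify $z' \sim_\tau z$ from $z \sim_\tau z'$ you cite ``the symmetry established in (2)'', but (2) concerns symmetry of $x = x' \aber \varphi$, not of $\sim_\tau$; the symmetry of $\sim_\tau$ is a separate fact recorded immediately after its definition.
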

\begin{proof}
The idea is to prove the conjunction of (1 -- 4) by simultaneous induction on the structure of $\varphi$.
\end{proof}

\begin{theo}{interpretingACextensional}
If $\eha + \AC \vdash \varphi(y_1^{\sigma_1}, \ldots, y_n^{\sigma_n})$, then \[ \HAP \vdash \exists x \forall y_1,\ldots,y_n \, \big( \, {\rm HEO}_{\sigma_1}(y_1) \land \ldots \land {\rm HEO}_{\sigma_n}(y_n) \to x \cdot y_1 \cdot \ldots \cdot y_n \aber \varphi \, \big). \]
\end{theo}
\begin{proof}
Again a straightforward induction on the length of the derivation of $\varphi$ in $\eha + \AC$, with \[ t = \lambda \tup z.\lambda u.{\bf p}(\lambda x^\sigma.{\bf p}_0(ux), \lambda x^\sigma.{\bf p}_1(ux)) \]
still realizing $\AC$.
\end{proof}

\begin{theo}{extgoodmanstheorem} {\rm (Beeson's extensional version of Goodman's Theorem)}
The system $\eha + \AC$ is conservative over $\HA$.
\end{theo}
\begin{proof}
If $\varphi$ is an arithmetical sentence, we have the following sequence of implications:
\begin{displaymath}
\begin{array}{lcr}
\eha + \AC \vdash \varphi & \Longrightarrow & \mbox{(\reftheo{interpretingACextensional}) } \\
\HAP \vdash \exists x \, x \aber \varphi & \Longrightarrow \\
\HAP_\varepsilon \vdash \exists x \, x \aber \varphi & \Longrightarrow & \mbox{(\reftheo{APPepsilonselfrealizing}) } \\
\HAP_\varepsilon \vdash \varphi & \Longrightarrow & \mbox{(\reftheo{HAPepsilonconservative}) } \\
\HA \vdash \varphi. & &
\end{array}
\end{displaymath}
\end{proof}

\subsection{Applications to classical systems} Goodman's Theorem has interesting consequences for classical systems as well, as we will now explain. (The results in this subsection were pointed out to us by Ulrich Kohlenbach, answering a question by Fernando Ferreira.)

In what follows we will call a formula in the language of $\ha$ \emph{quantifier-free} if it contains no quantifiers and no equalities of higher type (hence such a formula is built from equalities of type 0 and the propositional operations $\land,\lor,\to$). The \emph{quantifier-free axiom of choice}, denoted by $\QFAC$, is the schema
\[ \forall x^\sigma \, \exists y^\tau \, \varphi(x, y, \tup z) \to \exists f^{\sigma \to \tau} \, \forall x^\sigma \, \varphi(x, fx, \tup z), \]
where $\varphi$ is assumed to be quantifier-free.
\begin{theo}{kohlenbach1} {\rm (Kohlenbach)}
$\ipa + \QFAC$ is conservative over $\PA$.
\end{theo}
\begin{proof} (Compare \cite[Theorem 4.1]{kohlenbach92}.)
Suppose $\varphi$ is an arithmetical sentence provable in $\ipa + \QFAC$. Without loss of generality, we may assume that $\varphi$ is in prenex normal form:
\[ \varphi := \exists x_1 \, \forall y_1 \, \ldots \exists x_n \forall y_n \, \varphi_{qf}(x_1, y_1,\ldots,x_n,y_n), \]
with $\varphi_{qf}$ quantifier-free. If
\[ \ipa + \QFAC \vdash \varphi, \]
then also
\[ \ipa + \QFAC \vdash \varphi^H, \]
where
\[ \varphi^H := \forall f_1,\ldots,f_n \, \exists x_1,\ldots,\exists x_n \, \varphi_{qf}(x_1,f_1(x_1),\ldots,x_n,f_n(x_1,\ldots,x_n)) \]
is the Herbrand normal form of $\varphi$. By combining negative translation and the Dialectica interpretation (that is, the Shoenfield interpretation), it follows that
\[ \iha \vdash \varphi^H \]
as well. But then
\[ \iha \vdash \lnot \exists f_1,\ldots,f_n \, \forall x_1,\ldots,x_n \, \lnot \varphi_{qf}(x_1,f_1(x_1),\ldots,x_n, f_n(x_1,\ldots,x_n)), \]
and therefore
\[ \iha + \AC \vdash \lnot \forall x_1 \, \exists y_1 \ldots \forall x_n \, \exists y_n  \, \lnot \varphi_{qf}(x_1,y_1,\ldots,x_n,y_n). \]
By Goodman's Theorem we obtain
\[ \HA \vdash \lnot \forall x_1 \, \exists y_1 \ldots \forall x_n \, \exists y_n  \, \lnot \varphi_{qf}(x_1,y_1,\ldots,x_n,y_n), \] and therefore $\PA \vdash \varphi$.
\end{proof}

\begin{theo}{kohlenbach2} {\rm (Kohlenbach)}
The system $\epa + \QFAC$ is conservative over $\PA$.
\end{theo}
\begin{proof}
One can formalise the ECF-model of $\epa + \QFAC$ inside $\pa + \QFAC$ (see \cite[Theorem 2.6.20]{troelstra73}). This interpretation does not affect the meaning of statements mentioning only objects of type 0 and $1 = (0 \to 0)$, and therefore $\epa + \QFAC$ is conservative over $\pa + \QFAC$ for statements of this type. So this theorem follows from the previous.
\end{proof}

\bibliographystyle{plain} \bibliography{dst}

\end{document}